\documentclass[reqno,12pt]{amsart}
\usepackage[letterpaper,hmargin=1in,vmargin=1in]{geometry}
\usepackage{mathrsfs, amsmath, amssymb, amsthm, verbatim, bbm} 
\usepackage[hidelinks]{hyperref}
\usepackage{comment}
\usepackage{graphicx}
\usepackage{xcolor,mathabx}
\usepackage{mathrsfs}
\usepackage{amsmath,amsfonts,verbatim}
\usepackage{latexsym}
\usepackage{amssymb,leftidx}
\usepackage{extarrows}
\usepackage{overpic}
\usepackage{color}
\usepackage{epsfig}
\usepackage{subfigure}
\usepackage{tikz}
\usepackage{mathtools}
\usepackage{graphicx}
\usepackage{float}
\usepackage{mathrsfs}
\usepackage{amscd}
\usepackage{bbm}
\usepackage{cite}
\newtheorem{lem}{Lemma}

\newcommand{\Real}{\mathbb{R}}

\newcommand{\CI}{\mathcal{C}^{\infty}}

\DeclareMathOperator{\supp}{\mathrm{supp}}
\numberwithin{equation}{section}
\numberwithin{lem}{section}

\newcommand{\norm}[2][]{\left\|#2\right\|_{#1}}

\newcommand\R{\mathbb{R}}
\newcommand\C{\mathbb{C}}

\newcommand{\RR}{\mathbb{R}}

\newcommand\al{\alpha}
\newcommand{\A}{\vec{A}}

\renewcommand{\Im}{\operatorname{Im}}
\renewcommand{\Re}{\operatorname{Re}}

\newcommand{\NN}{\mathbb{N}}
\newcommand{\dom}{\mathcal{D}}

\newcommand{\tP}{\widetilde{P}}

\newcommand{\reals}{\mathbb{R}}

\numberwithin{equation}{section}
\newtheorem{theorem}{Theorem}[section]
\newtheorem{proposition}[theorem]{Proposition}

\newtheorem{corollary}[theorem]{Corollary}
\newtheorem{remark}[theorem]{Remark}

\title[AB Strichartz]{Strichartz estimates for Schr\"odinger equations with the multipole Aharonov--Bohm Hamiltonian} 

\author{Mengxuan Yang}
\email{yangmx@princeton.edu}
\address{Operations Research \& Financial Engineering, Princeton University, Princeton, NJ, 08544, USA}

\author{Junyong Zhang}
\email{zhang\_junyong@bit.edu.cn}
\address{Department of Mathematics, Beijing Institute of Technology, Beijing, 100081, China}

\begin{document}

\begin{abstract}
We prove global-in-time Strichartz estimates for Schr\"odinger equations with multipole Aharonov--Bohm Hamiltonians on $\RR^2$. As intermediate steps, we prove global-in-time local smoothing estimates for multipole Aharonov--Bohm Hamiltonians. 
\end{abstract}

\maketitle

\section{Introduction}
We consider the Aharonov--Bohm Hamiltonian with multiple poles 
\begin{gather}\label{oper-P} 
     P \coloneqq (-i \vec \nabla - \vec A)^2, \qquad \vec A = \sum_{k=1}^N \alpha_k \vec A_0(x-x_k,y-y_k), \qquad  \vec A_0(x,y) = \frac{(-y,x)}{x^2+y^2}
     \end{gather}
where $\alpha_k, x_k, y_k\in \Real$ with $\alpha_k\notin \mathbb{Z}$. Let $s_k = (x_k,y_k)$ and $S = \{s_1,\dots,s_N\}$ be distinct poles of the vector potential $\Vec{A}$. In addition, we assume that there are no three poles collinear (cf.~Remark~\ref{rm:pole}). We equip the Hamiltonian $P$ with its Friedrichs domain $\mathcal D$, and we assume for convenience that $s_1$ is at the origin. This model corresponds to the Aharonov--Bohm Hamiltonian \cite{AB59} of $N$ infinitely thin solenoids in $\R^3$ parallel to the $z$-axis. In particular, while the magnetic potential is non-vanishing everywhere, the magnetic field $\vec{B}=\nabla\times\A$ vanishes in $\R^2\backslash S$. We prove global-in-time Strichartz estimates for Schr\"odinger equations with the Aharonov--Bohm Hamiltonian \eqref{oper-P}. 

\begin{theorem}\label{thm:stri-S} Let $P$ be the Friedrichs self-adjoint extension of the Schr\"odinger operator given in \eqref{oper-P} with $N\ge 1$. If $u$ solves the equation 
\begin{equation}\label{equ:S}
\begin{cases}
(D_t- P )u=0,\\
u|_{t=0}=f(x),
\end{cases}
\end{equation} 
where $D_t = \frac{1}{i}\partial_t$, then there exists a constant $C>0$ such that
\begin{equation}\label{str-S}
\|u(t,x)\|_{L^p_t(\R;L^q(\R^2))}\leq C \|f\|_{L^2(\R^2)},\quad 
\end{equation} 
where $(p,q)\in\Lambda^S$ with
\begin{equation} \label{eqS:admissible}
 \Lambda^S:=\left\{(p,q)\in[2,+\infty]^2: \frac{2}{p} + \frac{2}{q} = 1, \quad (p,q)\neq (2,\infty)\right\}.
\end{equation}
\end{theorem}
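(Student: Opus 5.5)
The strategy is the usual localization argument for scattering problems: split the solution into a part near the poles and a part near infinity. The abstract indicates the key input, a global-in-time local smoothing estimate for $P$. Concretely, we aim to prove, for a cutoff $\chi\in\CI_c(\R^2)$ equal to $1$ on a large ball $B$ containing $S$,
\begin{equation*}
\|\chi\, e^{-itP}f\|_{L^2_t(\R;\mathcal D^{1/2})}\le C\|f\|_{L^2},
\end{equation*}
possibly with a $\langle x\rangle^{-1/2-\epsilon}$ weight in place of $\chi$ for the exterior analysis. By the standard $TT^*$/Kato argument, this is equivalent to a uniform resolvent bound $\sup_{\lambda\in\R\setminus 0,\ \epsilon>0}\|\chi(P-\lambda\mp i\epsilon)^{-1}\chi\|_{L^2\to\mathcal D}\lesssim 1$.

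The resolvent bound splits into three frequency regimes.

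\textbf{High energy.} We use semiclassical propagation of singularities. Away from $S$ the operator is a smooth magnetic Laplacian with zero field, so rays are straight lines. At each pole, the Aharonov--Bohm singularity diffracts: rays through $s_k$ continue as diffracted rays in all directions, but with a loss of regularity. The non-collinearity assumption guarantees that no trapped geodesic passes through three poles. Trajectories bouncing between two poles are unavoidable, but each diffraction gains a factor $h^{1/2}$ in the diffractive front set (the cone-type diffraction results of Ford--Wunsch type). A non-trapping-style estimate then follows, with no loss, via a positive commutator and a finite-step propagation argument.

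\textbf{Low energy.} We need a uniform bound as $\lambda\to0$. The plan is to compare with the single-pole operator near infinity: the total flux $\sum\alpha_k$ governs the behavior at infinity, and the scaling-critical potential $|x|^{-2}$ is handled by a Hardy inequality. Since $\alpha_k\notin\mathbb{Z}$, there are no zero resonances. We would use a Mourre/virial identity with the generator of dilations, $x\cdot\nabla$. The field vanishes, so the commutator $i[P,A]=2P$ plus compactly supported errors at the poles, and these errors are controlled by the Friedrichs domain.

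\textbf{Intermediate energies.} Limiting absorption comes from the absence of embedded eigenvalues, via Rellich uniqueness outside $B$ and unique continuation.

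With local smoothing in hand, write $u=\chi u+(1-\chi)u$.

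\textbf{Exterior piece.} Set $w=(1-\chi)u$. It solves $(D_t-P)w=[P,\chi]u$, and the commutator is first order and compactly supported. We compare $P$ on $\operatorname{supp}(1-\chi)$ with the single-pole Aharonov--Bohm operator $P_0$ with flux $\alpha=\sum\alpha_k$. After a gauge transformation the difference $\vec A-\alpha\vec A_0$ is a smooth gradient outside $B$ (the field vanishes there), so $e^{i\phi}w$ solves the $P_0$ equation with the same kind of forcing. The known Strichartz estimates for $P_0$ (Fanelli--Felli--Fontelos--Primo, Gao--Yin--Zhang--Zheng type dispersive bounds) apply. We then use the Christ--Kiselev lemma for $p>2$, and a dual-smoothing inhomogeneous estimate $\|\int e^{-i(t-s)P_0}\chi' F\,ds\|_{L^pL^q}\lesssim\|F\|_{L^2\mathcal D^{-1/2}}$ obtained from $TT^*$ combined with local smoothing for $P_0$. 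Together these bound the Duhamel term by $\|\tilde\chi u\|_{L^2\mathcal D^{1/2}}\lesssim\|f\|_2$.

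\textbf{Interior piece.} For $\chi u$ we use a semiclassical/Littlewood--Paley decomposition and local-in-time Strichartz on compact sets with the $h$-time-scale parametrix of Burq--Gérard--Tzvetkov type, valid near cone points as for Ford's Strichartz estimates on flat cones. Each pole is locally a single Aharonov--Bohm cone, and since the interaction with other poles occurs only after time $\sim h\cdot\mathrm{dist}$, we obtain $\|\chi\varphi(h^2P)u\|_{L^p([0,h];L^q)}\lesssim\|\varphi(h^2P)f\|_2$. Summing over time windows using local smoothing converts this into the global bound, losing nothing because the smoothing gain $h^{-1/2}\cdot h^{1/2}$ balances. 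Square-function estimates for $P$, which follow from Gaussian heat bounds (diamagnetic inequality), sum the dyadic pieces for $q<\infty$. The endpoint $p=2$ excluded is consistent with this, and the Keel--Tao endpoint is excluded in 2D anyway.

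The main obstacle is the high-energy resolvent estimate with multiple diffracting poles: establishing that the diffraction-loss compensation survives repeated pole-to-pole bounces. This is exactly where no-three-collinear enters, since it rules out geometric rays hitting poles in a geometric (non-diffractive) way in succession. I would first attempt the propagation argument along the lines of Hillairet--Wunsch for conic diffraction.
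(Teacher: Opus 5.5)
Your exterior step matches the paper: you gauge $(1-\chi)u$ to the single-pole operator of total flux $\beta=\sum_k\alpha_k$, use its Strichartz estimates, apply Christ--Kiselev for $p>2$, and use its dual local smoothing fed by local smoothing for $P$. The gap is the interior piece.

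The short-time bound $\|\chi\varphi(h^2P)u\|_{L^p([0,h];L^q)}\lesssim\|\varphi(h^2P)f\|_{L^2}$ is asserted, not derived. A Burq--G\'erard--Tzvetkov parametrix is a WKB construction for smooth operators and does not exist at an Aharonov--Bohm pole. Ford's flat-cone estimates come from an explicit cone propagator, not from such a parametrix.

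``Each pole is locally a single Aharonov--Bohm cone'' is the right intuition. Making it usable needs the idea the paper isolates in Lemma~\ref{lem:conj}. Cover a large ball by simply connected sets $U_j$, each containing only the pole $s_j$. On $U_j$ the potential difference $\alpha_j\vec A_0(\cdot-s_j)-\vec A=-\sum_{k\neq j}\alpha_k\vec A_0(\cdot-s_k)$ is smooth and curl-free, hence equal to $\nabla\varphi_j$. So $e^{i\varphi_j}Pe^{-i\varphi_j}=P_j$ there, where $P_j$ is the single-pole operator at $s_j$.

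With this in hand, no semiclassical machinery is needed. $P_j$ already has global-in-time Strichartz and local smoothing estimates. The function $\tilde u_j=e^{i\varphi_j}\chi_ju$ solves $(D_t-P_j)\tilde u_j=-e^{i\varphi_j}[P,\chi_j]u$. The forcing is supported away from every pole and is bounded in $L^2_t\mathcal D_j^{-1/2}$ by local smoothing for $P$. Each interior piece is then treated exactly like your exterior piece. The Littlewood--Paley decomposition, the time windows of length $h$, and the square-function estimates can all be dropped. Without the gauge reduction, your interior step has no proof.

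On the local smoothing input, the uniform bound on $\chi(P-\lambda\mp i\epsilon)^{-1}\chi$ should be $L^2\to\mathcal D^{1}$, equivalently $\mathcal D^{-1/2}\to\mathcal D^{1/2}$. Into the Friedrichs domain $\mathcal D=\mathcal D^2$ it grows at high energy.

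More seriously, your low-energy plan gives no uniformity at the threshold, for three reasons.
\begin{itemize}
\item The main term $2P$ of $i[P,A]$ is only of the size of the energy, so a Mourre/virial estimate degenerates as the energy tends to $0$.
\item The error $(1+x\cdot\nabla)\vec A=\sum_k\alpha_k(s_k\cdot\nabla)\vec A_0(\cdot-s_k)$ is not compactly supported: it decays like $|x|^{-2}$ and is singular like $|x-s_k|^{-2}$ at the poles.
\item The magnetic Hardy inequality at infinity has constant $\operatorname{dist}(\beta,\mathbb Z)^2$. It therefore disappears when the total flux is an integer, a case the theorem covers.
\end{itemize}
A genuine zero-energy analysis is needed.

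The paper does not re-derive any of this. It imports the uniform low-energy bound from the resolvent asymptotics of \cite{CDY}. It imports the high-energy bound from the diffraction analysis of \cite{Yang22}, which is the part your propagation sketch would have to reproduce. For intermediate energies it uses the absence of embedded eigenvalues and resonances on the essential spectrum.
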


\begin{remark}
\label{rm:pole}
The non-collinear assumption on the poles may be dropped. However, we will not discuss it further, as it is not the main focus of the paper. Note that this geometrical assumption guarantees purely diffractive propagation among any three consecutive diffractions if we consider the propagator $e^{itP}$ or $\sin (t \sqrt{P})/\sqrt{P}$. See \cite{Yang22,Yang23} for more details, and \cite{FHH} for wave propagation in the collinear case. 
\end{remark}

\begin{remark} 
For the Aharonov--Bohm Hamiltonian with a single pole, i.e. $N=1$, the Strichartz estimates can be established using the dispersive estimate originally proved in \cite[Theorem 1.1]{FFFP} (or see \cite[Corollary 2.1]{GYZZ}) and Keel--Tao's abstract argument \cite{KT}.  
\end{remark}

The proof of Theorem \ref{thm:stri-S} is inspired by perturbation arguments (e.g., see \cite{Burq,BPST,BPSS}), with the help of the framework established in \cite{FZZ, Yang22}. For Schr\"odinger equations, to treat the Aharonov--Bohm Hamiltonian with multiple poles $P$ as a perturbation of the Hamiltonian $P_{\beta}$ with a single pole requires a global-in-time local smoothing estimate
$$
\big\| \chi e^{itP}f\big\|_{L_{t}^2(\R;{\dom}^{\frac12})}\leq C \|f\|_{L^2(\R^2)},
$$
where $\chi\in\CI_c$ is a smooth cutoff function with compact support. Here, the reference operator is given by
\begin{equation}
P_{{\beta}}:=\big(-i\nabla-\beta{\A}_{0}\big)^2
\end{equation}
where $\beta=\sum_{i=1}^N \alpha_i$ is the total flux. In particular, if $\beta=0$, $P_{\beta}$ is taken to be the free Laplacian on $\R^2$. 

\vspace{-2mm}

\subsection*{Related Works}

The study of decay and Strichartz estimates for dispersive equations has a rather long history, due to their central importance in both analysis and the theory of partial differential equations. We refer to \cite{BPSS, BPST, CDY2, CF, CS, CYZ, DF, DFVV, EGS1, EGS2, FV, FFT, FGK, GK, S} and the references therein for those estimates for Schr\"odinger and wave equations with electromagnetic potentials. Other related Strichartz results in nontrapping, rough-coefficient, asymptotically flat, conic, hyperbolic and exterior-domain settings include \cite{Burq,T00,MMT,ZZ2,BFM,BMW,EG,HTW,HZ,HSTZ,RS} and the references therein. Even for the single-pole Aharonov--Bohm Hamiltonian, i.e.~$N=1$, the situation becomes complicated. This is because the scaling-critical magnetic potential induces a long-range perturbation. Fanelli--Felli--Fontelos--Primo \cite{FFFP, FFFP1} studied the time-decay and Strichartz estimates for the Schr\"odinger equation with a single Aharonov--Bohm potential. More recently, Fanelli, Zheng and the second author \cite{FZZ}, as well as Gao, Yin, Zheng and the second author \cite{GYZZ} constructed kernels of the wave propagator and spectral measure, respectively, to prove the Strichartz estimate for the wave and Klein-Gordon equations. The success of the argument relies on the special structure of the operator $P$ when $N=1$.
For example, if $N=1$, one can translate the center to the origin and use the scaling invariance. Additionally, the eigenfunctions and eigenvalues of magnetic Laplacians on $\mathbb{S}^1$ are explicit, which allows one to construct the kernels. The Aharonov--Bohm Hamiltonian $P$ in \eqref{oper-P} with multiple solenoids no longer has rotation symmetry or scaling invariance. Hence, it requires new ingredients to establish the global-in-time Strichartz estimates. In similar settings, Strichartz estimates for the Schr\"odinger equations with multiple inverse-square potentials have been proved by Duyckaerts \cite{Duykaerts}, but only local in time. 

On the other hand, since the original work of Aharonov and Bohm \cite{AB59},  multipole Aharonov--Bohm Hamiltonians have recently drawn significant attention in spectral theory, scattering theory, and physics. Recent work on the multipole Aharonov--Bohm Hamiltonian includes self-adjoint extensions and spectral properties \cite{CF23}, variation of eigenvalues \cite{AF15, AF16,AFNN,Lena}, resolvents \cite{CDY,Sto89}, diffraction \cite{IT06,Yang21}, wave-traces \cite{Yang22}, and resonances \cite{AT11,AT14,Yang23}. Among these,  {\v{S}}t'ov{\'\i}{\v{c}}ek \cite{Sto89} investigated the case of two solenoids using the universal covering of the punctured plane and extended the study to finitely many solenoids. However, the results presented there involve an infinite sum, making it challenging to analyze decay or Strichartz estimates. Ito--Tamura \cite{IT06} studied the (semiclassical) scattering amplitudes for two solenoids, particularly in the asymptotic regime where their distance tends to infinity. Alexandrova--Tamura \cite{AT11,AT14} studied scattering resonances of two solenoids using a modified complex scaling method. They also computed resonances located within a logarithmic neighborhood at high energy. Eigenvalues of Aharonov--Bohm operators with varying poles have been studied by Abatangelo--Felli \cite{AF15, AF16}, Abatangelo--Felli--Noris--Nys \cite{AFNN}, etc.

\vspace{-2mm}

\subsection*{Acknowledgment}
We would like to thank Daniel Tataru and Jared Wunsch for their helpful discussions and interest in this project. MY acknowledges the support of the AMS-Simons travel grant and NSF DMS-2554813. JZ was supported by the Beijing Natural Science Foundation (1242011) and the National Natural Science Foundation of China (12531005).

\vspace{-2mm}

\subsection*{List of Notation}
\begin{itemize}
    \item The set of $N$ poles: $S = \{s_i=(x_i,y_i)\in\R^2: 1\leq i\leq N\}$.
    \item The bracket notation $\langle \lambda \rangle \coloneqq (1+|\lambda|^2)^{1/2}$.
    \item The Aharonov--Bohm Hamiltonian $P = (-i\nabla-\Vec{A})^2$ on $X=\mathbb{R}^2\backslash S$ with Friedrichs domain $\mathcal{D}$, where $\Vec{A}$ is given in equation \eqref{oper-P}.
      \item The Aharonov--Bohm Hamiltonian with a single pole at $s_k$ is given by 
    $P_{\alpha_k} := (-i\nabla-\alpha_k \vec A_0(x-x_k,y-y_k))^2, \ k = 1, \cdots, N$ on $\mathbb{R}^2\backslash \{s_k\}$ with Friedrichs domain $\mathcal{D}_{k}$.
    \item The model operator is given by $P_{\beta}= (-i\nabla-\beta\Vec{A_0})^2$ on $\mathbb{R}^2\backslash \{0\}$ with Friedrichs domain $\mathcal{D}_{\beta}$, where  $\beta=\sum_{i=1}^N \alpha_i$. This is the free Hamiltonian only when $\beta=0$. 
    \item The resolvent of $P$ is given by $R(\lambda)\coloneqq (P-\lambda^2)^{-1}$ on 
    $\R^2\setminus S$.
    \item The model resolvent is given by $R_{\beta}(\lambda) \coloneqq (P_{\beta}-\lambda^2)^{-1}$ on $\mathbb{R}^2\backslash \{0\}$.
\end{itemize}

\section{Local smoothing estimates}
\label{section:locsmoothing}
In this section, we prove local smoothing estimates for Schrödinger equations by combining the results of \cite{CDY} on low energy resolvents and \cite{Yang22} on high energy resolvents. 

\subsection{Operators and domains}\label{sec:dom}
We first recall some preliminaries about the operator $P$ and its domains, which were stated in \cite{Yang21}. The domains of $P_{\beta}$ are special cases of the domains of $P$. We define the power domains 
\begin{equation}
    \mathcal{D}^s:=\big\{u\in L^2:  (P+1)^{\frac s2}u \in L^2(\R^2) \big\}
\end{equation}
where $(P+1)^{\frac s2}$ is defined using the functional calculus. We remark that, away from the set of poles $S$, the domain $\mathcal{D}^s$ agrees with the usual Sobolev space $ H^s(\R^2)$. When $s\geq 0$, another equivalent way is to define the space
\begin{equation}
    \mathcal{D}^s:=\big\{u\in L^2:  P^{\frac s2}u \in L^2(\R^2) \big\}
\end{equation}
because of the inequality
$$ \norm[L^2]{P^{s/2} u} \leq \norm[L^2]{(1+P)^{s/2} u} \leq C_s(\norm[L^2]{u} + \norm[L^2]{P^{s/2}u}).$$ 
Similarly, we define the power domains for the single-pole Hamiltonian 
\begin{equation}
    \mathcal{D}^s_{\alpha_k}:=\big\{u\in L^2:  (P_{\alpha_k}+1)^{\frac s2}u\in L^2(\R^2) \big\}
\end{equation}
Note that the Friedrichs domain $\dom$ of $P$ agrees with the power domain $\dom^2$ in the above notation. 

\subsection{Resolvent estimates}
\label{sec:res-est}

We first prove a uniform resolvent estimate near the real axis.
\begin{proposition}\label{est:resovent} 
Let $\chi\in C_c^\infty(\R^2)$. Then there exists $\epsilon_0>0$ and a constant $C$ independent of $\lambda$ such that 
\begin{gather}
\label{resolvent-est1}
\norm[L^2 \to \dom^j ]{\chi R(\lambda) \chi} \leq C (1+{|\lambda|})^{-1+j},\ j =0,1.
\end{gather}
where $\lambda\in\C$ satisfying $|\mathrm{Im}\,\lambda|\leq \epsilon_0$. 
\end{proposition}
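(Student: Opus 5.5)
We split the strip $\{|\Im\lambda|\le\epsilon_0\}$ into a low-energy region $|\lambda|\le\lambda_0$ and a high-energy region $|\lambda|\ge\lambda_0$, and treat them with the two inputs announced at the start of this section: \cite{CDY} for low energies and \cite{Yang22} for high energies. In both regions we first prove the $L^2\to L^2$ bound, i.e.\ the case $j=0$. We then upgrade it to $j=1$ by an energy (integration by parts) argument.

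\emph{High energy.} For $|\lambda|\ge\lambda_0$, the results of \cite{Yang22} (no three poles collinear, so the diffractive geometry is nontrapping apart from the finitely many diffractive periodic orbits) give a cutoff resolvent with a logarithmic resonance-free region. What we need concretely is a bound $\norm[L^2\to L^2]{\chi R(\lambda)\chi}\le C|\lambda|^{-1}$ in a strip $|\Im\lambda|\le\epsilon_0$. This is the nontrapping-type estimate. To obtain it from \cite{Yang22}, we would use the propagation of singularities through the poles (diffraction) together with the no-collinearity assumption. Together these ensure that the diffracted waves returning to $\supp\chi$ are weaker by a factor $|\lambda|^{-1/2}$ per diffraction. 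After finitely many diffractions the remainder is a small perturbation, so a Neumann series closes. This is the main obstacle. If \cite{Yang22} only yields a polynomially growing bound in a logarithmic region, we would need to check that the diffractive coefficients actually give $O(|\lambda|^{-1})$ in a fixed strip. Our first attempt would be to glue single-pole resolvents $R_{\alpha_k}$ near each $s_k$ (whose bounds are explicit via Bessel functions) with the free resolvent away from $S$. This produces a parametrix whose error is $O(|\lambda|^{-1/2})$ after one bounce. Two bounces then give an error $<1/2$ for $|\lambda|$ large, provided $\epsilon_0$ is small enough that the exponential factors $e^{\epsilon_0 \cdot\mathrm{diam}}$ stay bounded.

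\emph{Low energy.} For $|\lambda|\le\lambda_0$, \cite{CDY} provides the low-energy expansion of $\chi R(\lambda)\chi$. Because $\alpha_k\notin\mathbb{Z}$, there is no zero resonance or eigenvalue: the Hardy-type inequality for Aharonov--Bohm operators excludes them. The cutoff resolvent is therefore continuous up to $\lambda=0$, with at most terms like $\lambda^{2\gamma}\log\lambda$ that remain bounded. Combined with continuation off the real axis into $|\Im\lambda|\le\epsilon_0$, this uses the absence of resonances on the compact set $[-\lambda_0,\lambda_0]$: there are no embedded eigenvalues, by a Rellich-type argument on $\R^2\setminus B_R$ where $\vec A$ is gauge-equivalent near infinity to $\beta\vec A_0$. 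Compactness then gives a uniform bound $C$, which is consistent with $(1+|\lambda|)^{-1}$ there.

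\emph{Upgrade to $j=1$.} Set $u=R(\lambda)\chi f$ and choose $\tilde\chi\in C_c^\infty$ with $\tilde\chi=1$ on $\supp\chi$. Pairing $(P-\lambda^2)u=\chi f$ with $\tilde\chi^2 u$ and using the quadratic form of the Friedrichs extension gives
\[
\norm[L^2]{\tilde\chi(-i\nabla-\vec A)u}^2\le C\big(|\lambda|^2\norm[L^2]{\tilde{\tilde\chi}u}^2+\norm[L^2]{f}\norm[L^2]{\tilde\chi u}+\norm[L^2]{\tilde{\tilde\chi} u}\norm[L^2]{\tilde\chi(-i\nabla-\vec A)u}\big).
\]
Here $\tilde{\tilde\chi}$ is a slightly larger cutoff, and the $j=0$ bound applies with these cutoffs. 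This yields $\norm[\dom^1]{\chi u}\le C\norm[L^2]{f}$, since $\norm[\dom^1]{v}^2\simeq\norm{v}^2+\norm{(-i\nabla-\vec A)v}^2$ for the form domain. Commuting $\chi$ through produces only $\nabla\chi$ terms, which are controlled the same way.
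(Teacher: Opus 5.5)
Your proposal follows the same plan as the paper's proof. High energies come from \cite{Yang22}, a neighborhood of $\lambda=0$ comes from \cite{CDY}, and the intermediate range comes from the absence of real resonances and embedded eigenvalues plus compactness (the paper cites \cite[Theorem 4.17]{DZ} for this step).

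\textbf{The $j=1$ case.} This is the one genuine difference, and your route is correct. The paper imports both cases directly: \cite[Theorem 5.2]{Yang22} already gives $\norm[L^2\to\dom^j]{\chi R(\lambda)\chi}\le C|\lambda|^{-1+j}$ for $j=0,1$ in $\{\Re\lambda>M,\ |\Im\lambda|\le\epsilon\}$, and \cite[Theorems 2, 3]{CDY} give the uniform $L^2\to\dom^j$ bound near $0$. You import only the $L^2\to L^2$ bound and upgrade it by pairing $(P-\lambda^2)u=\chi f$ with $\tilde\chi^2u$ through the Friedrichs form. This works for three reasons:
\begin{itemize}
\item $\tilde\chi^2u$ lies in the form domain;
\item $\norm[\dom^1]{v}^2\simeq\norm[L^2]{v}^2+\norm[L^2]{(-i\nabla-\vec A)v}^2$;
\item the term $|\lambda|^2\norm[L^2]{\tilde{\tilde\chi}u}^2$ is $O(\norm[L^2]{f}^2)$ precisely because the $j=0$ bound carries the sharp factor $|\lambda|^{-1}$.
\end{itemize}
Your version asks less of the references and gives the $\dom^1$ bound at intermediate energies automatically. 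The paper's version is shorter because the cited theorems already contain the $\dom^1$ statement.

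\textbf{High energy.} You do not need to hedge here. The strip estimate you want is exactly \cite[Theorem 5.2]{Yang22}, so cite it and drop the gluing sketch. As a stand-alone argument the sketch would be incomplete. The gluing error $\sum_j[P,\tilde\chi_j]R_j\chi_j$ is $O(1)$ in $L^2$ operator norm, not small. Extracting a gain of $|\lambda|^{-1/2}$ per diffraction in operator norm, uniformly in a strip, is the microlocal diffraction analysis of \cite{Yang22}. It does not follow from explicit Bessel-function bounds for the single-pole resolvents.

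\textbf{Low energy.} Your explanation via a Hardy inequality is not the right mechanism. When the total flux $\beta$ is an integer, $P$ is gauge-equivalent to $-\Delta$ near infinity, and no $|x|^{-2}$-weighted Hardy inequality holds there. More generally, uniform boundedness near the branch point $\lambda=0$ is a statement about the actual low-energy expansion, not just the absence of zero modes. That expansion is what \cite{CDY} supplies, and the paper takes the bound from there.
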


\begin{proof} 
High energy resolvent estimates in \cite[Theorem 5.2]{Yang22} yield that for any small $\epsilon > 0$
\[\norm[L^2 \to \dom^j ]{\chi R(\lambda) \chi} \leq C |\lambda|^{-1+j},\ j=0,1\]
in the region $\{\Re\lambda >M, |\Im\lambda| \leq \epsilon\}$ for some $M > 0$. The low-energy uniform resolvent asymptotics in \cite[Theorem 2,3]{CDY} yield that
\[\norm[L^2 \to \dom^j ]{\chi R(\lambda) \chi} \leq C,\ j=0,1 \]
in a small neighborhood $\{|\lambda|< \epsilon'\}$ for some $\epsilon'>0$. 
Combining them with the absence of resonance or embedded eigenvalue on the essential spectrum $[0,+\infty)$, which yields the resolvent estimate \eqref{resolvent-est1} for intermediate frequencies (cf.~\cite[Theorem 4.17]{DZ}), we conclude the proof. 
\end{proof}

By Proposition \ref{est:resovent}, we have the following
\begin{corollary}
\label{cor:resolvent} Let $\chi\in C_c^\infty(\R^2)$ and $s\in \R$. Then there exists $\epsilon_0>0$ and a constant $C$ independent of $\lambda$ such that 
\begin{gather}
\label{resolvent-est1'}
\|\chi R(\lambda) \chi\|_{\mathcal{D}^{s-1}\to \mathcal{D}^{s}} \leq C,
\end{gather}
where $\lambda\in\C$ satisfying $|\mathrm{Im}\,\lambda|\leq \epsilon_0$. 
\end{corollary}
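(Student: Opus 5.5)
The plan is to derive the bound \eqref{resolvent-est1'} from Proposition~\ref{est:resovent} in three steps: first establish it for $s=1$, then for $s=0$ by duality, then for all real $s$ by commuting powers of $P$ through the cutoffs. The commutators are the main obstacle.

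\emph{Step 1: the cases $s=0,1$.} By interpolation between $j=0$ and $j=1$ in Proposition~\ref{est:resovent}, $\|\chi R(\lambda)\chi\|_{L^2\to\dom^{\theta}}\le C(1+|\lambda|)^{-1+\theta}$ for $\theta\in[0,1]$. In particular, $\|\chi R(\lambda)\chi\|_{L^2\to\dom^1}\le C$, which is the case $s=1$.

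For the map $\dom^{-1}\to L^2$ we use duality. $P$ is self-adjoint, so $R(\lambda)^*=R(\bar\lambda^2)$-type, i.e. $(\chi R(\lambda)\chi)^*=\chi R(-\bar\lambda)\chi$ up to relabeling (since $\lambda^2\mapsto\bar\lambda^2$ and $|\Im\bar\lambda|\le\epsilon_0$ is preserved). Dualizing the $L^2\to\dom^1$ bound therefore gives $\|\chi R(\lambda)\chi\|_{\dom^{-1}\to L^2}\le C$, which is the case $s=0$.

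\emph{Step 2: general $s$ by commuting powers.} I would write $\chi R(\lambda)\chi = \tilde\chi\,\chi R(\lambda)\chi\,\tilde\chi$ with $\tilde\chi\in C_c^\infty$ equal to $1$ on $\supp\chi$. For $s=1+k$ with $k\in\NN$, apply $(P+1)$ repeatedly, using
\[
(P+1)\chi R(\lambda)\chi = \chi(P+1)R(\lambda)\chi + [P,\chi]R(\lambda)\chi
= \chi\chi + (1+\lambda^2)\chi R(\lambda)\chi + [P,\chi]R(\lambda)\chi .
\]
The term $(1+\lambda^2)\chi R\chi$ grows in $\lambda$, so a naive count fails. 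Instead I would argue as follows. Elliptic regularity gives $\|\chi_1 u\|_{\dom^{s}}\lesssim \|\chi_2(P-\lambda^2)u\|_{\dom^{s-2}}+\langle\lambda\rangle^{2}\|\chi_2 u\|_{\dom^{s-2}}$, which is too lossy, so a better route is an induction on $s$ via the commutator. The commutator $[P,\chi]=-2i(\nabla\chi)\cdot(\nabla-i\vec A)-\Delta\chi$ is a first-order operator with compactly supported coefficients.

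If we choose $\chi$ constant near the poles (possible after enlarging, since $\chi R\chi$ for a larger cutoff controls smaller ones), then $[P,\chi]$ is supported away from $S$. There $\dom^s=H^s$ and the commutator is an honest differential operator, mapping $\dom^{s}\to\dom^{s-1}$ locally.

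\emph{Step 3: closing the induction.} To avoid the $\lambda^2$ growth, I would use the identity $R(\lambda)\chi = \tilde\chi R(\lambda)\chi$ plus the standard resolvent-commutator formula
\[
\chi R(\lambda)\chi' = \chi R_{\rm ref}\chi' + \dots,
\]
or more concretely the semiclassical propagation of smoothness: $\|\chi R\chi f\|_{\dom^{s}}\lesssim \|\tilde\chi R\tilde\chi\|_{\dom^{s-2}\to\dom^{s-1}}$-type bounds obtained from $\chi (P+1)^{1/2}R\chi$. Since $(P+1)^{1/2}$ commutes with $R(\lambda)$, we have $(P+1)^{1/2}\chi R\chi=\chi R(P+1)^{1/2}\chi+[(P+1)^{1/2},\chi]R\chi$. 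The commutator $[(P+1)^{1/2},\chi]$ is of order $0$, bounded on each $\dom^{s}$, and this is exactly where smoothing of order $0$ suffices. The induction then shifts $s$ by $\tfrac12$ or $1$ at a time, using the $\dom^{s-1}\to\dom^{s}$ bound at the previous level together with the $L^2$-level bound from Step 1. Negative $s$ follow by duality as in Step 1.

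The main obstacle is justifying that $[(P+1)^{1/2},\chi]$ (or $[P,\chi]$) behaves like a pseudodifferential operator of the expected order on the singular domains $\dom^s$. Near the poles I would avoid this by taking $\chi\equiv$ const there. Away from the poles, I would use that $\dom^s$ agrees locally with $H^s$ and apply standard functional-calculus (Helffer–Sjöstrand) commutator estimates.
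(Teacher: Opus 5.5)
\textbf{There is a genuine gap in the inductive step.} Your base case (the $s=1$ bound, and the $s=0$ bound by duality) is exactly the paper's. You correctly see two things: using $(P-\lambda^2)R(\lambda)=I$ costs $\lambda^2$, and one must instead commute through $R(\lambda)$. The problem is the operator you commute. $(P+1)^{1/2}$ has a \emph{nonlocal} commutator with $\chi$: $[(P+1)^{1/2},\chi]$ is not supported near $\supp\nabla\chi$, so it cannot be written as $\tilde\chi\,[(P+1)^{1/2},\chi]\,\tilde\chi$.

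Consequently, the terms $[(P+1)^{1/2},\chi]R(\lambda)\chi$ and $\chi R(\lambda)[(P+1)^{1/2},\chi]$ contain $R(\lambda)$ with a compactly supported cutoff on one side only, and nothing available controls them. Proposition~\ref{est:resovent} bounds only $\chi R(\lambda)\chi$. The one-sided $R(\lambda)\chi$ is not uniformly bounded on $L^2$ as $\lambda$ approaches the real axis; for real $\lambda$ its output is outgoing and not in $L^2$. The identity $R(\lambda)\chi=\tilde\chi R(\lambda)\chi$ you invoke to bridge this is false, since the resolvent is not local. The property that matters is locality, not order. Helffer--Sj\"ostrand estimates away from the poles do not supply it, and rescuing the square-root route would need exponentially weighted resolvent bounds that are not part of the input.

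\textbf{The fix} is to commute the \emph{differential} operator $P$ all the way through. Since $[P,R(\lambda)]=0$,
\[
P\chi R(\lambda)\chi=[P,\chi]\,\tilde\chi R(\lambda)\chi+\chi R(\lambda)\tilde\chi\,[P,\chi]+\chi R(\lambda)\chi\,P .
\]
Inserting $\tilde\chi\equiv 1$ on $\supp\chi$ is now legitimate, because $[P,\chi]$ is a first-order differential operator supported in $\supp\nabla\chi$. The last term has no $\lambda^2$ loss. There $P:\mathcal D^{m}\to\mathcal D^{m-2}$ acts on the input, and $\chi R\chi:\mathcal D^{m-2}\to\mathcal D^{m-1}$ is the bound at level $m-1$. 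The first two terms use levels $m$ and $m-1$ together with $\mathcal D^{m}\hookrightarrow\mathcal D^{m-1}$. This two-step induction on integer $s$ from your base cases, followed by duality and interpolation, is the paper's proof.

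\textbf{A separate problem} is your reduction to $\chi$ constant near the poles (``a larger cutoff controls smaller ones''). It needs multiplication by $\chi$ to be bounded on $\mathcal D^{s}$. That fails at a pole for $s\ge 3$ if $\nabla\chi\neq 0$ there. For a single pole at $0$ with flux $\alpha\in(0,1)$, $r^{\alpha}$ is locally in $\mathcal D^{\infty}$, but $x\,r^{\alpha}$ is not locally in $\mathcal D^{3}$. The paper's use of $[P,\chi]:\mathcal D^{m}\to\mathcal D^{m-1}$ for $m\ge 2$ tacitly has the same limitation. This is harmless for the application, which only needs $s=\tfrac12$.
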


\begin{proof}
Using duality and interpolation, it suffices to prove the corollary for $s\in\NN$. We first prove \eqref{resolvent-est1'}. By the dual estimate of \eqref{resolvent-est1} for $j=1$, we have 
\[\|\chi R(\lambda) \chi\|_{\mathcal{D}^{-1}\to L^2} \leq C,\]
which shows that \eqref{resolvent-est1'} holds for $s=0, 1$.
Now we prove \eqref{resolvent-est1'} inductively in $s$. Assume \eqref{resolvent-est1'} holds for all $s\leq m$. For $s=m+1$, we only need to show
\[\|P\chi R(\lambda) \chi\|_{\mathcal{D}^{m}\to \dom^{m-1}} \leq C.\]
By the resolvent identity  
\[P\chi R(\lambda) \chi = [P,\chi] R(\lambda) \chi + \chi R(\lambda)[P,\chi] + \chi R(\lambda) \chi P,\]
and the mapping properties of $[P,\chi]$ and $P$, 
it is easy to see that each of the three terms is bounded from $\mathcal{D}^m$ to $\dom^{m-1}$. Indeed, we have
\[\begin{split}\norm[\mathcal{D}^{m}\to \dom^{m-1}]{[P,\chi] R(\lambda) \chi} &\leq \norm[\mathcal{D}^{m}\to \dom^{m}]{ \widetilde\chi R(\lambda) \chi} \norm[\mathcal{D}^{m}\to \dom^{m-1}]{[P,\chi] }
\\&\leq \norm[\dom^{m-1}\to \dom^m]{ \widetilde\chi R(\lambda) \chi} \norm[\mathcal{D}^{m}\to \dom^{m-1}]{[P,\chi] }\leq C,
\end{split}\]
where we take $\widetilde{\chi}\equiv 1$ on $\supp \chi$ and use the continuous embedding $\mathcal D^m\hookrightarrow\mathcal D^{m-1}$. Similarly, we obtain
\begin{gather*}
\norm[\dom^{m}\to \dom^{m-1}]{\chi R(\lambda)[P,\chi]} \leq \norm[\dom^{m-1}\to \dom^{m-1}]{\chi R(\lambda)\widetilde\chi} \norm[\dom^{m}\to \dom^{m-1}]{[P,\chi]}\leq C,\\
\norm[\dom^{m}\to \dom^{m-1}]{\chi R(\lambda)\chi P} \leq \norm[\dom^{m-2}\to \dom^{m-1}]{\chi R(\lambda)\chi} \norm[\dom^{m}\to \dom^{m-2}]{P}\leq C.  
\end{gather*}
This finishes the proof.
\end{proof}

\subsection{Local smoothing estimates}
\label{sec:locS}
We now prove global-in-time local smoothing estimates for Schr\"odinger equations.
\begin{theorem}
\label{thm:local-smoothing_global} 
If $u$ is a solution to the Schr{\"o}dinger equation 
\begin{equation*}
\begin{cases}
     (D_t - P) u = 0, \\
    u(0,x) = f(x),
\end{cases}
\end{equation*}
and $\chi \in C_c^{\infty}(\RR^2)$ is a compactly supported smooth function, then $u$ satisfies a local smoothing estimate
  \begin{equation}\label{loc:S}
    \norm[L^{2}(I; \dom^{\frac{1}{2}})]{\chi u} \leq C \norm[L^{2}(\RR^2)]{f},
  \end{equation}
where $I$ is either $[0,T]$ or $\RR$ respectively and the constant $C$ is independent of $I$.  
\end{theorem}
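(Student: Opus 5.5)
The plan is to follow the Kato smoothing approach, deducing \eqref{loc:S} from the uniform resolvent bound of Corollary~\ref{cor:resolvent}. Applied with $s=\frac12$ it gives
\[
\sup_{0<|\Im\lambda|\le\epsilon_0}\|\chi R(\lambda)\chi\|_{\dom^{-1/2}\to\dom^{1/2}}\le C .
\]
Introduce the operator $A:=(P+1)^{1/4}\chi$ and regard $\chi u$ in $\dom^{1/2}$ as $A e^{itP}f$ in $L^2$. Kato's theorem states that $A$ is $P$-smooth, i.e. $\int_\R\|Ae^{itP}f\|_{L^2}^2\,dt\le C\|f\|_{L^2}^2$, as soon as
\[
\sup_{z\notin\R}\|A(P-z)^{-1}A^*\|_{L^2\to L^2}<\infty .
\]
Here $A^*=\chi(P+1)^{1/4}$, so $A(P-z)^{-1}A^*$ is bounded on $L^2$ exactly when $\chi(P-z)^{-1}\chi$ is bounded $\dom^{-1/2}\to\dom^{1/2}$.

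We write $z=\lambda^2$ with $\Im\lambda>0$ and split into two regions.

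\emph{Region 1: $|\Im\lambda|\le\epsilon_0$.} This is the region near the spectrum $[0,\infty)$, including a neighborhood of $z=0$ and large $|z|$. Here Corollary~\ref{cor:resolvent} gives the bound directly.

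\emph{Region 2: $z$ away from $[0,\infty)$.} This covers $\Im\lambda>\epsilon_0$, and also negative real $z$ if one wants it. In this region we expect the uniform bound to follow from the spectral theorem:
\[
\|(P+1)^{1/2}(P-z)^{-1}\|_{L^2\to L^2}\le \sup_{\mu\ge0}\frac{(\mu+1)^{1/2}}{|\mu-z|}.
\]
This supremum is uniformly bounded once $\operatorname{dist}(z,[0,\infty))\gtrsim 1$. One then has to commute the cutoff $\chi$ past $(P+1)^{1/4}$; we plan to do this by the complex interpolation already used in the proof of Corollary~\ref{cor:resolvent}, or simply by working with $\chi$ on one side only.

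\emph{Main obstacle.} The delicate part is the intermediate zone, where $|\Im\lambda|$ is comparable to $\epsilon_0$ but $|z|$ is large. There $\operatorname{dist}(z,[0,\infty))\approx 2\epsilon_0\Re\lambda$ is large, so the spectral bound above is $O(1)$ and the two regions overlap; the issue disappears once this overlap is checked.

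To avoid having to justify the weighted operators $(P+1)^{1/4}\chi$ precisely, I would first try the more concrete $TT^*$/Plancherel route. Set $u_\epsilon=e^{-\epsilon|t|}\mathbf 1_{t>0}\,e^{itP}f$ and take its Fourier transform in $t$, which is $i(P-\tau-i\epsilon)^{-1}f$ up to sign conventions. Then $\|\chi u_\epsilon\|_{L^2_t\dom^{1/2}}^2$ equals the $\tau$-integral of $\|\chi R(\sqrt{\tau+i\epsilon})f\|_{\dom^{1/2}}^2$. This is controlled by $\|f\|^2_{L^2}$ via the resolvent estimate and the identity
\[
\int_\R\|\chi(P-\tau\mp i\epsilon)^{-1}f\|^2\,d\tau=\frac{\pi}{\epsilon}\langle\ldots\rangle .
\]
In practice, this is Kato's argument written out. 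One then lets $\epsilon\to0$ by monotone convergence and treats $t<0$ symmetrically. The constant is independent of $I$ because it is bounded by the global $\R$ estimate.
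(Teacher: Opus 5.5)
Your Kato-smoothing route is correct and is essentially the paper's argument. The paper writes out the $TT^*$/Plancherel proof behind Kato's criterion and applies Corollary~\ref{cor:resolvent} with $s=\tfrac12$ to $(P-E\mp i0)^{-1}$. Quoting the criterion over all $z\notin\R$ additionally requires your spectral-theorem bound away from $[0,\infty)$, which the paper also tacitly uses for $E<0$. That bound holds because $\chi$ is bounded on $\dom^{\pm1/2}$ and because $\Im\sqrt z>\epsilon_0$ forces $\operatorname{dist}(z,[0,\infty))\gtrsim\langle z\rangle^{1/2}$; mere $\operatorname{dist}\gtrsim1$ would not suffice, and there is no intermediate zone. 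The displayed identity in your fallback is not correct with $\chi$ inside and $f$ unlocalized: without $\chi$ it only gives the useless $\tfrac{\pi}{\epsilon}\|f\|_{L^2}^2$. That variant therefore needs the duality/$TT^*$ step putting $\chi$ on both sides of the resolvent, which is exactly the paper's proof.
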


\begin{remark}
Since the constant $C$ in \eqref{loc:S} is independent of $I$, the estimate \eqref{loc:S} is global-in-time. The local-in-time local smoothing estimate for the Aharonov--Bohm Hamiltonian with multiple poles is proved in the second author's previous work \cite{Yang22}. Also note that Theorem~\ref{thm:local-smoothing_global} implies the dual estimate
  \begin{equation}\label{dualsmoothing}
    \norm[L^{2}]{\int _{I}e^{-isP}\chi F(s) \,ds} \leq
    C \norm[L^{2}(I; \dom^{-\frac{1}{2}})]{F},
  \end{equation}
  for $I$ being either $[0,T]$ or $\RR$.
\end{remark}

\begin{proof}
We only consider the case $I=\R$. We use the resolvent estimate \eqref{resolvent-est1} and the standard $TT^*$-argument. We define
    \[Tu = \chi e^{itP} u.\]
    We now show $T$ is a bounded operator from $L^{2}(X)$ to $L^{2}(\reals;\dom^{\frac{1}{2}})$. It suffices to show that $TT^{*}$ is bounded from $L^{2}(\reals ; \dom^{-\frac{1}{2}})$ to $L^{2}(\reals; \dom^{\frac{1}{2}})$.  The operator $TT^{*}$ is given by
  \begin{align*}
    TT^{*}F &= \chi\int_{\reals}e^{i(t-s)P}\chi F(s)\,ds \\
    &= \chi\int_{s < t}e^{i(t-s)P}\chi F(s)\,ds + \chi \int_{s >
      t}e^{i(t-s)P }\chi F(s)\,ds =: \chi T_{+}F+ \chi T_{-}F.
  \end{align*}  
Note that by Duhamel's principle, $T_{\pm}F$ are both solutions of the inhomogeneous Schr{\"o}dinger equation
  \begin{equation}
  \label{eq:inhomogeous}
  \begin{cases}
    D_{t} u - P u =  {\pm} \frac{1}{i}\chi F,\\ 
    u(t)\big|_{t=\pm\infty}=0.
    \end{cases}
  \end{equation}
Suppose now that $F$ is compactly supported in time, i.e., $F(t,x) = 0$ for $t \notin [-t_{0},t_{0}]$ for some $t_0$.  In this case, $T_{+}F$
vanishes for $t < -t_{0}$ and $T_{-}F$ vanishes for $t >t_{0}$. 

For this $F$ compactly supported in time, we wish to show that there is a constant $C$, independent of $t_{0}$, such that 
  \begin{equation*}
    \int _{\reals}\norm[\dom^{\frac{1}{2}}]{\chi T_{\pm}F(t,x)}^{2} \,dt
    \leq C \int_{\reals}\norm[\dom^{-\frac{1}{2}}]{F(t,x)}^{2}\,dt.
  \end{equation*}
By Plancherel's theorem, it suffices to show that
  \begin{equation}\label{loc:s'}
    \int_{\reals}\norm[\dom^{\frac{1}{2}}]{\chi
      \widehat{T_{\pm}F}(E, x)}^{2}\,dE \leq C
    \int_{\reals}\norm[\dom^{-\frac{1}{2}}]{\hat{F}(E,x)}^{2}\,dE, 
  \end{equation}
where $\hat{F}$ denotes the Fourier transform of $F$ in $t$. Taking the Fourier transform of the equation \eqref{eq:inhomogeous}, we see that $\widehat{T_{\pm}F}(E,x)$ solves the equation 
  \begin{equation*}
    (P - E) \widehat{T_{\pm}F} =  {\mp} \frac{1}{i}\chi \hat{F}.
  \end{equation*}
Moreover, the condition on the support of $F$ implies that $\widehat{T_{+}F}$ is holomorphic in the lower half-plane, while $\widehat{T_{-}F}$ is holomorphic in the upper half-plane. In particular, write $\widehat{R}(z) = (P - z)^{-1}$ where it is invertible,
  \begin{equation*}
    \widehat{T_{\pm}F}(E,x) = \lim _{\epsilon \downarrow 0} \widehat{R}(E \mp i\epsilon) \left(  {\mp} \frac{1}{i}\chi \hat{F}(E,x)\right).
  \end{equation*}
We now apply the uniform resolvent estimate \eqref{resolvent-est1'} in Corollary \ref{cor:resolvent} with $s=1/2$, which shows that there is a constant $C$ independent of $E$ and $t_{0}$, so that
  \begin{equation*}
    \norm[\dom^{\frac{1}{2}}]{\chi \widehat{T_{\pm}F}(E,x)}^{2} \leq C \norm[\dom^{-\frac{1}{2}}]{\hat{F}(E,x)}^{2}.
  \end{equation*}
Integrating in $E$ then finishes the proof of \eqref{loc:s'} for $F$ compactly supported in time.  For the $F$ in the general setting, we simply note that the constant is independent of the support and that compactly supported functions are dense in $L^{2}_{t}$.
\end{proof}

\section{Proof of Strichartz estimates}

\subsection{Strichartz estimates for AB Hamiltonian with a single pole}

We first briefly recall Strichartz estimates for the Aharonov--Bohm Hamiltonian with a single pole, as these are key ingredients in the proof of our main theorem.
\begin{proposition}\label{prop:stri-one} Let $P_{\alpha}=(-i\nabla-\alpha\A_0)^2$ be the single-pole Aharonov--Bohm Hamiltonian. For $(p,q)\in\Lambda^S$ and $I$ being an interval or $\RR$, the solution of Schr\"odinger equations
\begin{equation*}
\begin{cases}
(D_t-P_{\al})u=0,\\
u|_{t=0}=f(x),
\end{cases}
\end{equation*}
satisfies the Strichartz estimates
\begin{equation}\label{str-S-j}
\|u(t,x)\|_{L^p_t(I;L^q(\R^2))}\leq C \|f\|_{L^2(\R^2)}.
\end{equation}
\end{proposition}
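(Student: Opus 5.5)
The plan is to derive Proposition~\ref{prop:stri-one} from the dispersive estimate for the single-pole Aharonov--Bohm propagator combined with the abstract Keel--Tao machinery. This is the route indicated in the remark following Theorem~\ref{thm:stri-S}.

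First, I would recall the dispersive bound
\begin{equation*}
\|e^{itP_{\alpha}}f\|_{L^\infty(\R^2)}\leq C|t|^{-1}\|f\|_{L^1(\R^2)},\qquad t\neq 0,
\end{equation*}
which is \cite[Theorem 1.1]{FFFP} (alternatively \cite[Corollary 2.1]{GYZZ}). The mechanism behind it is as follows. Since $P_\alpha$ commutes with rotations, one decomposes in angular modes $e^{ik\theta}$. On each mode $P_\alpha$ becomes a Bessel operator of order $\nu_k=|k+\alpha|$. The Schr\"odinger kernel is then given explicitly through Weber's second exponential integral,
\begin{equation*}
\frac{1}{2it}e^{-\frac{r^2+r'^2}{4it}}\sum_{k}e^{ik(\theta-\theta')}i^{-\nu_k}I_{\nu_k}\Big(\frac{rr'}{2it}\Big),
\end{equation*}
up to the sign conventions of $e^{itP}$. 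The needed ingredient is a uniform bound on the angular sum $\sum_k e^{ik\phi}i^{-\nu_k}I_{\nu_k}(\rho/i)$ for real $\rho$. This bound is obtained in \cite{FFFP} via an integral representation of $I_\nu$ that splits into a geometric-optics part and a bounded diffractive part. Scaling invariance of $P_\alpha$ reduces everything to a single uniform estimate, which yields the factor $|t|^{-1}$ in dimension two.

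Second, I would note the energy identity $\|e^{itP_\alpha}f\|_{L^2}=\|f\|_{L^2}$, which holds because $P_\alpha$ (Friedrichs extension) is self-adjoint. With this identity and the dispersive decay rate $|t|^{-\sigma}$ with $\sigma=1$, Keel--Tao \cite{KT} gives $\|U(t)f\|_{L^p_tL^q_x}\lesssim\|f\|_{L^2}$ for all $\sigma$-admissible pairs, namely $\frac1p+\frac{\sigma}{q}=\frac{\sigma}{2}$ with $(p,q,\sigma)\neq(2,\infty,1)$. For $\sigma=1$ this is precisely $\frac2p+\frac2q=1$ with $(p,q)\neq(2,\infty)$, i.e.\ $\Lambda^S$. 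The estimate on a subinterval $I$ follows trivially from the estimate on $\R$. A pole at $s_k$ instead of the origin is handled by translation, which commutes with the $L^p_tL^q_x$ norms.

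The main obstacle is the uniform bound on the diffractive (Bessel-series) part of the kernel, especially for non-integer $\alpha$. Since this is exactly the content of \cite{FFFP}, I would cite it rather than reprove it. The remaining steps are standard.
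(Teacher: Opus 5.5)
Your proposal is correct and follows the paper's proof: the paper also combines the dispersive bound $\|e^{itP_\alpha}\|_{L^1\to L^\infty}\lesssim |t|^{-1}$ from \cite[Theorem 1.1]{FFFP} (or \cite[Corollary 2.1]{GYZZ}) with $L^2$ unitarity and the Keel--Tao theorem for $\sigma=1$, which gives exactly the pairs in $\Lambda^S$. Your extra remarks are consistent with how the proposition is used later: restricting from $\RR$ to a subinterval $I$, and translating the pole to $s_k$.
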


\begin{proof} The Strichartz estimate \eqref{str-S-j} follows from the dispersive estimates proved in \cite[Theorem 1.1]{FFFP} (or see \cite[Corollary 2.1]{GYZZ}) and Keel--Tao's abstract method in \cite{KT}. 
\end{proof}

We now construct a partition of unity $\{\chi_j\}_{j=0}^{N}$ of $\R^2$ with respect to poles of the Hamiltonian $P$. For each pole $s_j$, we define $U_j$ to be a simply connected neighborhood of $s_j$ in $\R^2$ such that $U_j$ does not contain any $s_k$ when $k\neq j$, and $\{U_j\}_{j=1}^N$ is an open cover of $U_0\coloneqq B(0,R_0)$ for $R_0\gg 0$. Let $\chi_0, \chi_1,\cdots,\chi_N$ be a partition of unity subordinate to the cover $\{U_j\}_{j=0}^N$ of $\R^2$, where $U_0$ does not contain any pole, with $\chi_j\equiv 1$ near $s_j$ for $1\leq j \leq N$.  Note that by this construction $\supp\chi_j, j\neq 0$ only contains the pole $s_j$ but no other poles. 

We define the Aharonov--Bohm Hamiltonian with a single pole at $s_j$
\begin{equation}
    \label{eq:Pj}
    P_j := (-i\nabla-\alpha_j\A_0(x-x_j, y-y_j))^2, \ j = 1, \cdots, N
\end{equation}
and $P_0 := P_{\beta}$ with the corresponding domain $\dom_j$. The corresponding vector potentials are denoted by $\A_j$. We first prove the following
\begin{lem}
\label{lem:conj}
    There exists $\varphi_j\in\CI(\R^2)$ such that, on $U_j$
    \begin{equation}
    \label{eq:conj}
        P=e^{-i\varphi_j} P_j e^{i\varphi_j}, \ j=0,1,\cdots N,
    \end{equation}
in the sense that $P u = e^{-i\varphi_j} P_j e^{i\varphi_j} u$ for any $u\in \dom$ supported in $U_j$. 
\end{lem}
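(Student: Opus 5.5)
The plan is a gauge transformation argument. The basic identity is that for a real smooth function $\varphi$ on an open set,
\[
e^{-i\varphi}(-i\nabla - \A_j)e^{i\varphi} = -i\nabla - (\A_j - \nabla\varphi).
\]
Consequently
\[
e^{-i\varphi}P_je^{i\varphi} = (-i\nabla - \A_j + \nabla\varphi)^2 .
\]
So \eqref{eq:conj} holds on $U_j$ as soon as $\nabla\varphi_j = \A_j - \A$ on $U_j$. The task is therefore to show that the $1$-form $\A - \A_j$ is exact on $U_j$. We then extend the primitive to a function in $\CI(\R^2)$ by multiplying with a cutoff, or simply use it only on $U_j$, since the statement concerns functions supported there.

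\textbf{Case $1\le j\le N$.} Here $\A - \A_j = \sum_{k\neq j}\alpha_k\A_0(\cdot - s_k)$. Each $\A_0(\cdot-s_k)$ is the differential of the angle function $\theta_k$ around $s_k$, and $\theta_k$ is a smooth single-valued function on any simply connected region avoiding $s_k$. Since $U_j$ is simply connected and contains no $s_k$ with $k\ne j$, the function
\[
\varphi_j = \sum_{k\ne j}\alpha_k\theta_k
\]
is well defined and smooth on $U_j$. Equivalently, the form is closed, because $\nabla\times\A_0(\cdot-s_k)=0$ away from $s_k$, so Poincaré's lemma applies.

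\textbf{Case $j=0$.} Here $U_0$ is where $\chi_0$ lives. I will take $U_0$ to be the exterior region $\{|x|>R_0/2\}$, which is not simply connected. The difference is
\[
\A - \beta\A_0 = \sum_k \alpha_k\big(\A_0(\cdot-s_k)-\A_0\big),
\]
and it is closed on $U_0$. Its circulation around a large circle is $2\pi\sum_k\alpha_k - 2\pi\beta = 0$, and this loop generates $\pi_1(U_0)$. Hence the form is exact and $\varphi_0 = \sum_k\alpha_k(\theta_k - \theta)$ is single-valued there; concretely, each $\theta_k-\theta$ is smooth for $|x|$ large because it is the angle subtended by the segment $[0,s_k]$. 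This is the step where the choice of $\beta$ as the total flux is essential, and it is the main point to verify.

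\textbf{Domains.} It remains to check the statement at the level of domains: for $u\in\dom$ supported in $U_j$, the function $e^{i\varphi_j}u$ should lie in $\dom_j$, and the identity should hold as distributions. Away from the poles this is immediate because both operators are smooth-coefficient operators. Near $s_j$, $\varphi_j$ is smooth and bounded, and the Friedrichs (form) domains are defined by finiteness of $\|(-i\nabla - \A)u\|_{L^2}$. The gauge identity above maps the form of $P$ to the form of $P_j$, so the form domains correspond, and hence so do the operator domains. I expect this domain bookkeeping to be routine; the genuine obstacle is the single-valuedness of $\varphi_0$, which is settled by the flux computation.
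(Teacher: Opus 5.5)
Your argument is correct and is essentially the paper's: take a primitive of $\A_j-\A$ on a (slightly enlarged) neighborhood of $U_j$, cut it off to get $\varphi_j\in\CI(\R^2)$, and square the conjugated first-order identity. You are in fact more careful than the paper at $j=0$, where the exterior region is not simply connected and exactness genuinely rests on the flux matching $\beta=\sum_k\alpha_k$ (the paper just asserts the primitive exists). One sign needs fixing: with your own requirement $\nabla\varphi_j=\A_j-\A$, the explicit primitives should be $\varphi_j=-\sum_{k\neq j}\alpha_k\theta_k$ and $\varphi_0=\sum_k\alpha_k(\theta-\theta_k)$, not their negatives.
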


\begin{proof}
We take $\tilde{\chi}_j\in\mathcal{C}_c^\infty(\R^2)$ to be a smooth cut-off function such that $\tilde{\chi}_j\equiv 1$ on $U_j$, and is supported in a slightly larger simply connected open set $V_j\supset U_j$, such that only the pole $s_j\in\supp\tilde{\chi}_j$ where $j=1\cdots, N$. Similarly, we can define $\tilde{\chi}_0\in\mathcal{C}^\infty(\R^2)$ such that $\tilde{\chi}_0\equiv 1$ on $U_0$ with no pole in $\supp\tilde{\chi}_0$. Hence, there exist $\psi_j\in \CI(V_j)$ satisfying 
\[
    \nabla\psi_j=\A_j-\A \qquad\text{on }V_j .
\]
Now we set 
$\varphi_j=\widetilde\chi_j\psi_j\in\CI(\R^2)$. Hence, on each $U_j$ where $\tilde{\chi}_j\equiv 1$, we have $\nabla\varphi_j=\A_j-\A$. Therefore
\[
    e^{i\varphi_j}(-i\nabla-\A)e^{-i\varphi_j} =-i\nabla-\A_j \qquad\text{on }U_j
\]
and squaring the above equation gives $e^{i\varphi_j}Pe^{-i\varphi_j}=P_j$ on $U_j$. 
\end{proof}

For notational convenience, we write $\tP_j := e^{i\varphi_j} P e^{-i\varphi_j}$, for $j=0,1,\cdots N.$ Hence, we have $\tP_j = P_j$ over $U_j$, for functions locally in $\dom_j$ supported in $U_j$.   

\subsection{Strichartz estimates for Schr\"odinger equations}

We are now ready to prove our main theorem. We follow the idea of \cite{BMW} and use, as a new ingredient, the local gauge transform in Lemma \ref{lem:conj} to reduce the multipole problem to single-pole ones.

\begin{proof}[Proof of Theorem \ref{thm:stri-S}]
Let $u$ be the solution of the homogeneous Schr\"odinger equation with initial data $f$, i.e.,
\begin{equation}
\begin{cases}
(D_t - P)u=0,\\
u|_{t=0}=f(x).
\end{cases}
\end{equation}
 Set $u_j:=\chi_j u$, 
then $u_j$ solves the following inhomogeneous Schr\"odinger equation on $X$
\begin{equation}
\begin{cases}
(D_t - P)u_j=-[P,\chi_j]u,\\
u_j|_{t=0}=\chi_j f(x),
\end{cases}
\end{equation}
where $[P,\chi_j]$ is a first order differential operator supported where $d\chi_j\neq0$, hence away from all poles by construction. Taking the unitary conjugation by $e^{i\varphi_j}$ and writing $\tilde{u}_j := e^{i\varphi_j}u_j$, the equation becomes 
\begin{equation}
\begin{cases}
(D_t - \tP_j)\tilde{u}_j = - e^{i\varphi_j} [P,\chi_j]u,\\ 
\tilde{u}_j|_{t=0}=e^{i\varphi_j}\chi_j f(x).
\end{cases}
\end{equation}
As the support of $\tilde{u}_j, [P,\chi_j]u$ and $\chi_j f(x)$ are all contained in $U_j$, by Lemma \ref{lem:conj}, the equation is the same as solving the inhomogeneous equation of the Aharonov--Bohm Hamiltonian $P_j$ with a single pole at $s_j$:
\begin{equation}
\begin{cases}
(D_t - P_j)\tilde{u}_j = - e^{i\varphi_j} [P,\chi_j]u,\\ 
\tilde{u}_j|_{t=0}=e^{i\varphi_j}\chi_j f(x).
\end{cases}
\end{equation}

We write $\Tilde{u}_{j} = u_{j}' + u_{j}''$, where $u_{j}'$ is the solution
of the homogeneous equation with the same initial data
\begin{equation}
\begin{cases}
(D_t - P_j)u'_j = 0,\\
u'_j|_{t=0}=e^{i\varphi_j} \chi_j f(x),
\end{cases}
\end{equation}
and $u_{j}''$ is the solution of the inhomogeneous equation with
zero initial data. 

We know by Proposition \ref{prop:stri-one} and local domain equivalence that $u'_{j}$ satisfies the homogeneous Strichartz estimate
\begin{equation*}
\|u'_j(t,x)\|_{L^p_t(I;L^q(\R^2))}\leq C \|f\|_{L^2}.
\end{equation*}

We now set $v_{j}(t,x) = -e^{i\varphi_j} [P, \chi_{j}]u.$  Then by Duhamel's principle,
\begin{equation*}
  u_{j}'' =  {i} \int_{0}^{t}e^{i(t-s)P_j} v_{j}(s) \, ds. 
\end{equation*}
Note that $[P, \chi_{j}]$ is a compactly supported differential
operator of order one supported away from the poles and thus the local smoothing estimate Theorem \ref{thm:local-smoothing_global} implies that there is a constant
$C$ so that
\begin{equation}
\label{eq:aux1}
  \norm[L^{2}({\RR}; \dom_j^{-\frac{1}{2}})]{v_{j}} \leq C\norm[L^{2}]{f}.
\end{equation}
We wish to show that $u_{j}''$ obeys the Strichartz estimates.  As
we are assuming that $p > 2$ in condition \eqref{eqS:admissible}, the Christ--Kiselev
lemma~\cite{CK} implies that it suffices to show the Strichartz 
estimate for 
\begin{equation*}
   \tilde{u}_{j}'':=  {i} \int_{\RR}e^{i(t-s)P_j}v_{j}(s) \, ds =  {i} \, e^{itP_j} \int_{\RR}e^{-isP_j}v_{j}(s) \, ds.
\end{equation*}
By the dual local smoothing estimate for the Hamiltonian $P_j$ with a single pole in Theorem~\ref{thm:local-smoothing_global} and the estimate \eqref{eq:aux1}, 
\begin{equation}
\label{eq:aux2}
  \norm[L^{2}]{\int_{\RR}e^{-isP_j} v_{j}(s) \,ds} \leq
  C\norm[L^{2}({\RR};\dom_{j}^{-\frac{1}{2}})]{v_{j}}  \leq C\norm[L^{2}]{f}.
\end{equation}
This together with the homogeneous Strichartz estimate \eqref{str-S-j} for the propagator $e^{itP_j}$ shows
\begin{equation}
   \|\tilde{u}_{j}''\|_{L^p_t(\R;L^q(\RR^2))}\leq C \norm[L^{2}]{\int_{\RR}e^{-isP_j} v_{j}(s)\, ds} \leq C\norm[L^{2}]{f},
   \end{equation}
where we use \eqref{eq:aux2} for the last inequality. Therefore, combining estimates for $u_j'$ and $u_j''$, and summing over all $j$'s, we complete the proof.  
\end{proof}

\begin{center}

\end{center}

\end{document}